\newcommand{\pn}{\par\noindent}
\newcommand{\pmn}{\par\medskip\noindent}
\newcommand{\pbn}{\par\bigskip\noindent}
\newtheorem{prop}{Proposition}[section]
\theoremstyle{definition} \newtheorem{cor}{Corollary}[section]
\newtheorem{ex}{Example}[section] \theoremstyle{remark}
\newtheorem{rem}{Remark}[section]
\begin{document}
\title{Plane trees, Shabat-Zapponi polynomials and Julia sets}
\author{Yury Kochetkov}
\begin{abstract} A tree, embedded into plane, is a dessin d'enfant
and its Belyi function is a polynomial --- Shabat polynomial.
Zapponi form of this polynomial is unique, so we can correspond to
an embedded tree the Julia set of its Shabat-Zapponi polynomial.
In this purely experimental work we study relations between the
form of a tree and properties (form, connectedness, Hausdorff
dimension) of its Julia set. \end{abstract}
\email{yukochetkov@hse.ru, yuyukochetkov@gmail.com} \maketitle

\section{Introduction}
\pn Shabat polynomial of a plane bipartite tree is not unique, but
we can made it unique, if we demand that: a) critical values are
$+1$ and $-1$; b) sum of coordinates of white vertices (i.e.
inverse images of $1$) is $1$; c) sum of coordinates of black
vertices (i.e. inverse images of $-1$) is $-1$. Shabat polynomial
with these properties will be called Shabat polynomial in Zapponi
form, or Shabat-Zapponi polynomial, or SZ-polynomial \cite{Zapp}.
Thus, we can correspond to a tree the Julia set, i.e. the Julia
set of its SZ-polynomial. We want to understand is there a
correspondence between geometry of a plane tree and such
properties of its Julia set as form, connectedness and Hausdorff
dimension? \begin{rem} At first it was expected that Julia set of
a Shabat polynomial is something simple with Hausdorff dimension
approximately $1$ (because Shabat polynomial is a generalized
Chebyshev polynomial). This assumption turned out to be wrong. So,
we decided to study the Zapponi form of Shabat polynomial, because
if there exists a SZ-polynomial for a given bipartite tree, then
such polynomial is unique. \end{rem} \pn In the course of this
experimental work we found that: a) there is some similarity
between the form of a given tree and the form of its Julia set; b)
the connectedness of Julia set is probably the main characteristic
of an embedded tree.

\section{Definitions and notations}
\subsection{Zapponi form of Shabat polynomials and its properties}
We consider plane bipatite trees, i.e. trees embedded into plane,
with vertices properly colored in black and white. A polynomial
$p$ with exactly two finite critical values --- one and minus one
will be called \emph{Shabat polynomial} \cite{Lando}. The inverse
image $T(p)=p^{-1}[-1,1]$ of segment $[-1,1]$ is a plane bipartite
tree, where white vertices are images of $1$ and black
--- of $-1$. For each plane bipartite tree $T$ there exists a
Shabat polynomial $p$ such that trees $T$ and $T(p)$ are isotopic.
Such polynomial will be called a Shabat polynomial of the tree
$T$. If polynomials $p$ and $q$ are Shabat polynomials of the same
tree $T$, then $q(z)=p(\alpha z+\beta)$ for some constants
$\alpha\neq 0$ and $\beta$. \pmn A Shabat polynomial is in Zapponi
form \cite{Zapp}, if the sum of coordinates of white vertices is
$1$ and black vertices --- $-1$. \begin{prop} Let $T$ be a
bipartite tree and $p=a_nz^n+a_{n-1}z^{n-1}+\ldots+a_1 z+a_0$ ---
its SZ-polynomial. Then $a_{n-1}=0$.\end{prop}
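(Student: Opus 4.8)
The plan is to translate the two Zapponi normalizations, which are assertions about the \emph{unweighted} sums of the vertex positions, into a single statement about the coefficient $a_{n-1}$. First I would record the factorizations that encode the tree: writing $w_1,\dots,w_k$ for the white vertices (the distinct roots of $p-1$) with tree-valencies $d_1,\dots,d_k$, and $b_1,\dots,b_m$ for the black vertices with valencies $e_1,\dots,e_m$, one has
\[
p(z)-1=a_n\prod_{i=1}^{k}(z-w_i)^{d_i},\qquad p(z)+1=a_n\prod_{j=1}^{m}(z-b_j)^{e_j},
\]
since the only critical values are $\pm1$, so every preimage of $\pm1$ is a vertex whose multiplicity as a root equals its valency in $T$. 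Comparing the coefficient of $z^{n-1}$ on each side (Vieta) gives the two \emph{weighted} identities $\sum_i d_i w_i=-a_{n-1}/a_n$ and $\sum_j e_j b_j=-a_{n-1}/a_n$; observe that these two weighted sums are automatically equal, because $p-1$ and $p+1$ differ only in their constant term.

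The heart of the argument is to convert these weighted sums into the \emph{unweighted} sums $W=\sum_i w_i$ and $B=\sum_j b_j$ that the Zapponi conditions actually control. For this I would pass to the derivative: since all critical points sit at vertices, $p'(z)=n\,a_n\prod_v (z-v)^{\deg_T(v)-1}$, the product running over all $n+1$ vertices $v$ of $T$. Applying Vieta to $p'$ (of degree $n-1$) yields $\sum_v(\deg_T(v)-1)\,v=-\frac{(n-1)a_{n-1}}{n\,a_n}$, while the previous step gives $\sum_v \deg_T(v)\,v=\sum_i d_i w_i+\sum_j e_j b_j=-2a_{n-1}/a_n$. Subtracting the first relation from the second isolates the total unweighted vertex sum:
\[
W+B=\sum_v v=-\frac{2a_{n-1}}{a_n}+\frac{(n-1)a_{n-1}}{n\,a_n}=-\frac{n+1}{n}\,\frac{a_{n-1}}{a_n}.
\]

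To conclude I would simply add the defining conditions (b) and (c): $W+B=1+(-1)=0$, and feeding this into the displayed identity forces $a_{n-1}=0$, since $a_n\neq0$ and $n\geq1$. Note that the two conditions individually pin down the affine representative of the Shabat polynomial (hence the uniqueness of the SZ-form), but only their sum is needed for this statement. The main obstacle I anticipate is precisely the weighted-to-unweighted bridge of the second paragraph: Vieta on $p\mp1$ delivers the sums weighted by valency, whereas the normalization speaks of each vertex once, and reconciling the two requires the clean bookkeeping of $p'$ together with the fact that every critical point lies at a vertex (which is exactly what the hypothesis ``critical values are $\pm1$'' guarantees). Once that identity is in hand, the proposition is immediate.
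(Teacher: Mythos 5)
Your proof is correct and takes essentially the same route as the paper: Vieta applied to $p\mp 1$ gives the two valency-weighted vertex sums (each equal to $-a_{n-1}/a_n$), Vieta applied to $p'$ bridges to the unweighted sum $W+B$, and the Zapponi normalizations force $W+B=1+(-1)=0$, hence $a_{n-1}=0$. Incidentally, your cleaner bookkeeping silently corrects a harmless typo in the paper's equation (2), whose right-hand side carries a spurious factor of $2$ (the sum of the roots of $p'$ is $-\frac{(n-1)a_{n-1}}{na_n}$); the paper's conclusion is unaffected.
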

\begin{proof} Let $x_1,\ldots,x_s$ be roots of polynomial $p-1$
with multiplicities $k_1,\ldots,k_s$, respectively, and
$y_1,\ldots,y_t$, $l_1,\ldots,l_t$ be roots of $p+1$ and their
multiplicities. Then
$$\sum_{i=1}^s k_ix_i=-\frac{a_{n-1}}{a_n}= \sum_{j=1}^t l_jy_j
\Rightarrow \sum_{i=1}^s k_ix_i+  \sum_{j=1}^t l_jy_j = -2\,
\frac{a_{n-1}}{a_n}. \eqno(1)$$ Also we have, that
$$p'(z)=na_nz^{n-1}+(n-1)a_{n-1}z^{n-2}+\ldots+a_1= na_n
\prod_{i=1}^s (z-x_i)^{k_i-1}\prod_{j=1}^t (z-y_j)^{l_j-1}.$$
Hence,
$$\sum_{i=1}^s k_ix_i+  \sum_{j=1}^t l_jy_j=
\sum_{i=1}^s k_ix_i-\sum_{i=1}^sx_i+\sum_{j=1}^t l_jy_j-
\sum_{j=1}^ty_j= -2\cdot \frac{(n-1)a_{n-1}}{na_n}. \eqno(2)$$
From (1) and (2) we have that $a_{n-1}=0$. \end{proof}
\begin{cor} If p is a Shabat polynomial and $a_{n-1}=0$, then
$$\sum_{i=1}^s x_i= - \sum_{j=1}^t y_j.$$\end{cor}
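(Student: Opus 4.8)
The plan is to extract the corollary directly from the identities already assembled in the proof of the proposition, now specialized to $a_{n-1}=0$. First I would record the two vertex-sum relations coming from Vieta's formulas applied separately to $p-1=a_n\prod_{i=1}^s(z-x_i)^{k_i}$ and $p+1=a_n\prod_{j=1}^t(z-y_j)^{l_j}$: the coefficient of $z^{n-1}$ in each of $p-1$ and $p+1$ is $a_{n-1}$, so the sums of roots counted with multiplicity are $\sum_{i=1}^s k_ix_i=-a_{n-1}/a_n$ and $\sum_{j=1}^t l_jy_j=-a_{n-1}/a_n$. These are exactly the two members of equation $(1)$, and under the hypothesis $a_{n-1}=0$ both of them vanish.

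Next I would bring in the derivative. Using the factorization $p'(z)=na_n\prod_{i=1}^s(z-x_i)^{k_i-1}\prod_{j=1}^t(z-y_j)^{l_j-1}$ established above, the multiset of roots of $p'$ consists of each $x_i$ with multiplicity $k_i-1$ and each $y_j$ with multiplicity $l_j-1$. Applying Vieta to $p'(z)=na_nz^{n-1}+(n-1)a_{n-1}z^{n-2}+\ldots$ shows that the sum of these roots equals $-\frac{(n-1)a_{n-1}}{na_n}$, which is $0$ since $a_{n-1}=0$. Hence $\sum_{i=1}^s(k_i-1)x_i+\sum_{j=1}^t(l_j-1)y_j=0$, which is the middle member of equation $(2)$.

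Finally I would subtract. Expanding the last relation as $\bigl(\sum_i k_ix_i+\sum_j l_jy_j\bigr)-\bigl(\sum_i x_i+\sum_j y_j\bigr)=0$ and inserting the vanishing weighted sums from the first step leaves $\sum_{i=1}^s x_i+\sum_{j=1}^t y_j=0$, which rearranges to the asserted $\sum_i x_i=-\sum_j y_j$. I do not expect a genuine obstacle here: the argument is just three applications of Vieta's formulas --- to $p-1$, to $p+1$, and to $p'$ --- glued together, with all the needed pieces already present in $(1)$ and $(2)$. The only point demanding a little care is the bookkeeping of multiplicities: one must keep the weighted sums $\sum k_ix_i$ and $\sum l_jy_j$ (which govern the $z^{n-1}$ coefficients of $p\mp1$) distinct from the unweighted sums $\sum x_i$ and $\sum y_j$ appearing in the statement, and observe that their difference is precisely the root sum of $p'$.
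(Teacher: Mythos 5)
Your proposal is correct and is essentially the paper's own argument: the corollary is left without a separate proof precisely because it follows by specializing equations $(1)$ and $(2)$ from the proposition's proof to $a_{n-1}=0$ and subtracting, which is exactly what you do. Your careful separation of the weighted sums $\sum k_ix_i$, $\sum l_jy_j$ from the unweighted ones is the same bookkeeping the paper relies on implicitly.
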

\begin{cor} Let $T$ be a bipartite tree. If there exist its
SZ-polynomial $p$, then $p$ is unique and its field of definition
coincides with the field of definition of the tree $T$
\cite{Lando}. \end{cor}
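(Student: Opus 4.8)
The plan is to prove the two assertions — uniqueness of the SZ-polynomial, and equality of the two fields of definition — separately, using the normalization conditions for the first and Galois equivariance for the second.

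\textbf{Uniqueness.} First I would invoke the fact recalled above, that any two Shabat polynomials of $T$ are related by $q(z)=p(\alpha z+\beta)$ for some $\alpha\neq 0$ and $\beta$; it then suffices to show that the two Zapponi normalizations force $\alpha=1$ and $\beta=0$. Let $x_1,\dots,x_s$ be the white vertices of $p$ and $y_1,\dots,y_t$ the black ones, so that $\sum_i x_i=1$ and $\sum_j y_j=-1$. The white (resp. black) vertices of $q$ are the numbers $(x_i-\beta)/\alpha$ (resp. $(y_j-\beta)/\alpha$), since $q(z)=+1$ (resp. $-1$) exactly when $\alpha z+\beta$ is a white (resp. black) vertex of $p$. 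Imposing that $q$ is again in Zapponi form yields the two relations
$$\frac{1-s\beta}{\alpha}=1,\qquad \frac{-1-t\beta}{\alpha}=-1.$$
Adding and subtracting these gives $(s+t)\beta=0$, whence $\beta=0$ (as $s+t>0$) and then $\alpha=1$. Thus $q=p$, so the SZ-polynomial is unique.

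\textbf{Field of definition.} For the second assertion I would use Galois descent. The absolute Galois group $G=\mathrm{Gal}(\overline{\mathbb Q}/\mathbb Q)$ acts on polynomials through their (algebraic) coefficients and, compatibly, on plane bipartite trees, so that applying $\sigma\in G$ to a Shabat polynomial of $T$ produces a Shabat polynomial of the conjugate tree $\sigma(T)$. The conditions defining the Zapponi form — critical values $\pm1$ and white/black coordinate sums equal to $\pm1$ — are all defined over $\mathbb Q$, hence preserved by every $\sigma$; therefore $\sigma(p)$ is precisely the SZ-polynomial of $\sigma(T)$. Combining this equivariance with the uniqueness just proved, $\sigma$ fixes $p$ if and only if $\sigma(T)\cong T$: indeed $\sigma(p)=p$ forces the tree of $\sigma(p)$, namely $\sigma(T)$, to equal $T$, while $\sigma(T)\cong T$ makes $\sigma(p)$ and $p$ SZ-polynomials of isomorphic trees, hence equal. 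Consequently the stabilizer of $p$ in $G$ equals the stabilizer of $T$, and by the Galois correspondence their fixed fields — the field of definition of $p$ (the field generated by its coefficients) and the field of definition of $T$ — coincide.

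\textbf{Main obstacle.} The computation yielding uniqueness is routine linear algebra; the arithmetic half is the delicate part. It requires setting up the Galois action on dessins carefully and verifying the equivariance $\sigma(\text{SZ-polynomial of }T)=\text{SZ-polynomial of }\sigma(T)$ rigorously. The crux is exactly that the three normalizations are rational, so that the uniqueness statement upgrades the \emph{a priori} field of moduli of $T$ into a genuine field of definition realized by $p$ itself. For the underlying formalism of the Galois action on trees and their fields of definition I would rely on \cite{Lando}.
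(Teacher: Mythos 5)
Your proposal is correct, but it takes a genuinely different route from the paper in both halves, most markedly in the second. For uniqueness the paper first proves its Proposition (any SZ-polynomial has $a_{n-1}=0$), deduces that two Shabat polynomials of the same tree with vanishing subleading coefficient can differ only by $z\mapsto\alpha z$, and then notes that the white-sum normalization fixes $\alpha$ uniquely; your direct computation --- writing $q(z)=p(\alpha z+\beta)$, imposing both normalizations to get $(1-s\beta)/\alpha=1$ and $(-1-t\beta)/\alpha=-1$, hence $(s+t)\beta=0$, $\beta=0$, $\alpha=1$ --- is an equivalent, self-contained variant that bypasses the Proposition entirely. The real divergence is in the field-of-definition half. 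The paper argues constructively: starting from a Shabat polynomial $q=b_nz^n+\ldots+b_0\in K[z]$ over a field of definition $K$ of $T$, it shifts $z:=z-b_{n-1}/b_n$ to kill the subleading coefficient, observes that $X=\sum x_i$ lies in $K$ (the white vertices are the roots of the squarefree part of $q-1$, computable over $K$), rescales $z:=Xz$ to force the white sum to $1$, and invokes the first Corollary ($\sum x_i=-\sum y_j$ when $a_{n-1}=0$) to get the black sum $-1$ for free; this exhibits the SZ-polynomial explicitly inside $K[z]$ and, as a by-product, identifies exactly when it fails to exist, namely when $X=0$. Your Galois-descent argument instead matches the stabilizer of $p$ in $\mathrm{Gal}(\overline{\mathbb{Q}}/\mathbb{Q})$ with the stabilizer of $T$ and concludes via the Galois correspondence; this is the standard rigidity argument (essentially the mechanism in \cite{Zapp}) and it proves the conceptually sharper statement that the field of moduli of $T$ is a genuine field of definition, realized by $p$ itself. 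The trade-off: your route requires the full formalism of the Galois action on dessins and a careful reading of ``field of definition of $T$'' as the field of moduli (a priori only contained in every field of definition --- a subtlety you correctly flag, and one the paper glosses over in the opposite direction by simply assuming a Shabat polynomial over $K$ exists), whereas the paper's normalization argument is elementary and algorithmic, giving the containment $\mathbb{Q}(\text{coefficients of }p)\subseteq K$ for every field of definition $K$ directly. Both proofs are sound; yours is stronger in arithmetic content, the paper's in explicitness and in detecting the obstruction $X=0$.
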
 \begin{proof} If
$p=a_nz^n+a_{n-1}z^{n-1}+\ldots+a_0$ is a Shabat polynomial of a
tree $T$ and $a_{n-1}=0$, then $p$ is unique up to variable change
$z:=\alpha z$ and the unique choice of $\alpha$ in this variable
change gives us SZ-polynomial. \pmn Let now $K$ be the field of
definition of a tree $T$ and $q=b_nz^n+\ldots+b_0\in K[z]$ be its
Shabat polynomial. The variable change $z:=z-b_{n-1}/b_n$
preserves the field of definition, but turns coefficient at
$z^{n-1}$ to zero. If $X=\sum x_i$, then $X\in K$. If $X\neq 0$,
then the the variable change $z:=X\cdot z$ also preserves the
field of definition, but turns (in new coordinates) $X$ to one.
Then $Y=\sum y_j=-1$. If $X=0$, then Shabat polynomial in Zapponi
form does not exist for the tree $T$.
\end{proof}
\begin{rem} In \cite{Zapp} it was proved that SZ-polynomial  always
exists for trees with prime number of edges. SZ-polynomial
obviously does not exist, if the tree is symmetric, i.e. if it has
a nontrivial rotation automorphism with the center in one of
vertices. \pmn \textbf{Conjecture.} \emph{SZ-polynomial exists for
non-symmetric trees.} \pmn In what follows SZ-polynomial for a
tree $T$ will be denoted $p_T$.
\end{rem}

\subsection{Julia sets and Hausdorff dimension} Definitions of
Fatou and Julia sets see, for example, in the book \cite{Peit}.
For us the following properties of Julia sets will be important.
\begin{itemize}
    \item Julia set of a polynomial $p$ is the boundary of the
    basin of infinity, i.e. the boundary of open set of those points,
    whose iterations converge to infinity.
    \item Let $A_0$ be the set of stationary repelling points of
    $p$ and let $A_i=p^{-1}(A_{i-1})$, $i>0$. Then Julia set of $p$ is
    the closure of $\cup_i A_i$.
    \item Julia set of $p$ is connected if and only if iterations
    of critical points of $p$ constitute a bounded set. In the
    case of a SZ-polynomial it means that connectedness of Julia
    set is equivalent to the boundedness of iterations of $1$ and
    $-1$. If iterations of $1$ and $-1$ are both unbounded, then
    Julia set is totally disconnected (two dimensional Cantor
    compact). If iterations of $1$, for example, are bounded, and
    iterations of $-1$ --- not, then Julia set is a union of
    infinite number of connected components \cite{Erem}.
\end{itemize} \pmn Definition of Hausdorff dimension see in
\cite{Falc}. There are several methods of its computation.
Description of "box counting" and "packing dimension" methods see
in \cite{Falc}. Description of Jenkinson-Pollicott algorithm
(JP-algorithm) see in \cite{Jenk}.

\begin{rem} It must be noted that performance of these
algorithms differs from case to case. Box counting method does not
work, if Julia set is totally disconnected. It also demonstrate
bad performance, if Hausdorff dimension of Julia set is $>1.5$. If
there is a stationary point and derivative in this point is close
to $1$, then JP-algorithm demonstrates a bad convergence.
\end{rem}

\subsection{Julia sets of SZ-polynomials} Let T be a bipartite
tree and let $\mathfrak{T}$ be the same tree, but with inverse
colors (i.e. white vertices in $T$ are black in $\mathfrak{T}$ and
black vertices in $T$ are white in $\mathfrak{T}$). Then
$p_\mathfrak{T}(z)=-p_T(-z)$. Let $a_0$ be an arbitrary point and
$p_T(a_0)=a_1$, $p_T(a_1)=a_2$, $p_T(a_2)=a_3$ and so on. Then
$p_\mathfrak{T}(-a_0)=-a_1$, $p_\mathfrak{T}(-a_1)=-a_2$,
$p_\mathfrak{T}(-a_2)=-a_3$, and so on. It means that Julia sets
of polynomials $p_\mathfrak{T}$ and $p_T$ are the same up to
rotation on $\pi$ around the origin, i.e. characteristics of Julia
set depends only on tree and not on its coloring. In what follows
we will study one tree from the pair $(T,\mathfrak{T})$.

\begin{rem} Let $T$ be a bipartite tree. By fixing some white vertex of
degree $>1$ at $1$ and some black vertex of degree $>1$ at $-1$ we
uniquely define Shabat polynomial $p$ of $T$. In this case $p$
will be a postcritically finite polynomial (a pcf-polynomial),
i.e. a polynomial with finite orbit of set of critical points (see
\cite{Doua}). It must be noted that Shabat pcf-polynomial of a
tree $T$ is not unique.
\end{rem}

\begin{ex} Let $T$ be a tree with four edges:
\[\begin{picture}(50,40) \put(0,5){\circle*{3}}
\put(0,35){\circle*{3}} \put(15,20){\circle{4}}
\put(35,20){\circle*{3}} \put(50,20){\circle{4}}
\put(0,5){\line(1,1){14}} \put(0,35){\line(1,-1){14}}
\put(17,20){\line(1,0){31}} \end{picture}\] Then
$$p=-\frac{(z+1)^3(3z-8)}{8}-1$$ is its pcf-polynomial and
$$p_T=\frac{2(2z+1)^3(2z-3)}{27}+1$$ is its SZ-polynomial. Julia
sets of $p$ and $p_T$ are quite different: \pbn
\[\includegraphics[width=4cm,height=3cm]{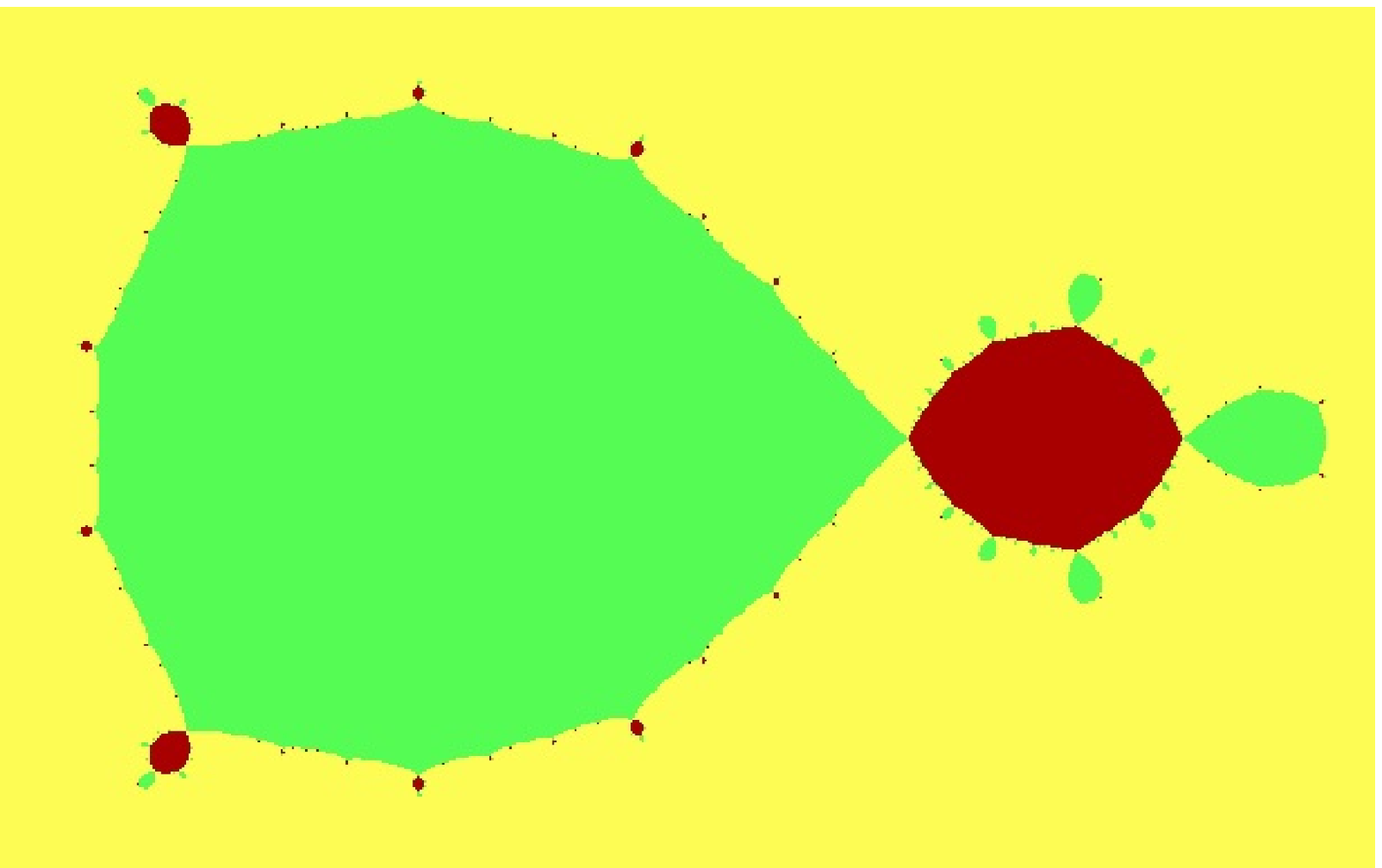}\hspace{2cm}
\includegraphics[width=4cm,height=3cm]{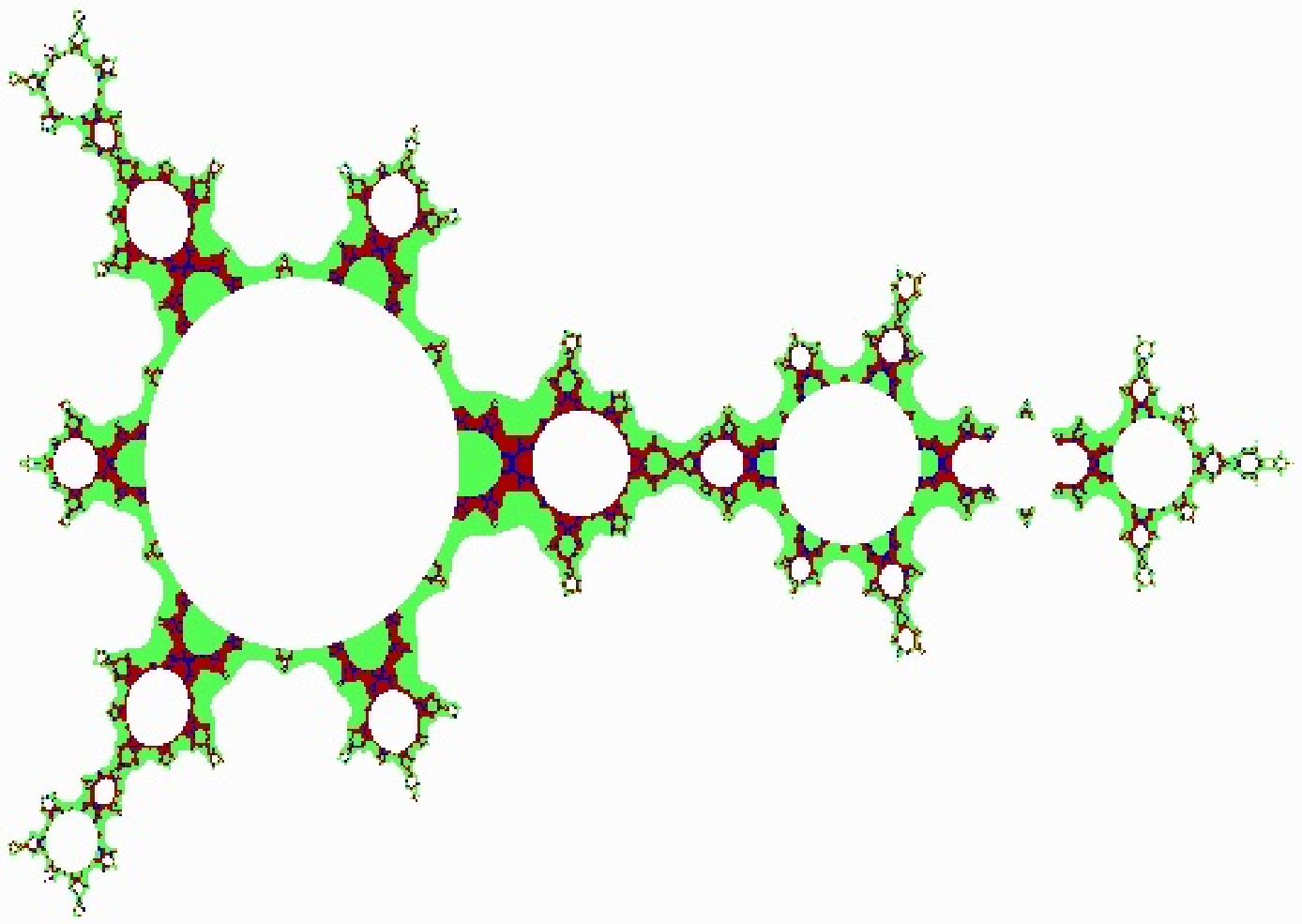}\]
\begin{center}{Figure 1. \hspace{4cm} Figure 2.}\end{center}
\begin{center}{Julia set of pcf-polynomial $p$. \hspace{13mm} Julia
set of SZ-polynomial $p_T$.}\end{center} \pmn In the left figure
iterations of yellow points converge to infinity, of green points
--- to $-1$, of red points --- to $1$. Julia set is connected. Its
Hausdorff dimension approximately equals $1.17$ (box counting
method) or $1.13$ (packing dimension method). \pmn SZ-polynomial
$p_T$ has a weakly attracting 10-cycle. Let $O$ be a union of the
domain $\{z\,|\,\text{abs}(z)>3\}$ and $0.01$-neighborhood of the
attracting cycle. In the right figure points that get into $O$ in
5 steps or less are white, in 6 or 7 steps --- green, in 8, 9 or
10 steps --- red. All other points (including points of Julia set)
are blue. Julia set is connected. For its Hausdorff dimension box
counting method gives estimation $\approx 1.62$, packing dimension
method --- $\approx 1.35$, JP-algorithm
--- $\approx 1.22$. \end{ex}

\section{General remarks}
\pn Let $T$ be a tree and $p_T$ --- its SZ-polynomial.
Characteristics of Julia set $J(p_T)$ depend on behavior of
iterations of $\pm 1$. There are several types of this behavior.
\subsection{"Generic" types}
\begin{description}
    \item[g1] Iterations on $\pm 1$ converge to an attracting
    point $p$. Here Julia set is a common border of two basins: the
    basin $B_\infty$ of infinity and the basin $B_p$ of attracting point
    $p$. As all vertices of $T$ belong to $B_p$, then
    the form of Julia set resembles the form of the tree (in some
    general manner). Hausdorff dimension here is close to 1.
    \item[g2] Iterations on $\pm 1$ converge to an attracting
    2-cycle. Julia set approximates the tree better, than in the previous
    case. Also the "fractality" of set is greater, hence the Hausdorff
    dimension is greater (in average). Julia sets in this case are more
    "interesting", than in previous. Good example see in Figure 4.
    \item[g3] Iterations on $\pm 1$ converge to an attracting $k$-cycle,
    where $k>2$. Julia sets can be very "interesting". The form of Julia
    set even more closely resembles
    the form of the tree. "Fractality" is great and Hausdorff
    dimension is greater, than $1.5$. Good examples see in Figures
    2, 3 and 6.
    \item[g4] Iterations on $\pm 1$ converge to infinity. Julia
    set here is totally disconnected, but some small similarity to
    the form of the tree remains. Hausdorff dimension can be smaller,
    than 1. Julia sets are rather "uninteresting". See example of s3
    case in Figure 5.
\end{description}
\subsection{"Special" types}
\begin{description}
    \item[s1] Iterations of $1$ (for example) converge to an
    attracting point and iterations of $-1$ converge to an
    attracting $k$-cycle, $k>1$.
    \item[s2] Iterations of $1$ converge to one attracting
    $k$-cycle, $k>1$, and iterations of $-1$ converge to another
    attracting $l$-cycle, $l>1$.
    \item[s3] Iterations of $1$ (for example) converge to an
    attracting point and iterations of $-1$ converge to infinity.
    Julia set here is a union of infinite number of connected
    components. Good example see in Figure 5.
\end{description}

\begin{rem} In what follows we will give several most interesting
examples of Julia sets of SZ-polynomials. \end{rem}

\section{Trees with five edges}
\pn In this section for each 5-edge tree $T$ we will compute its
SZ-polynomial $p_T$ and find characteristics of $J(p_T)$. The
passport of a tree $T$ is the list of degrees of white vertices
(in non increasing order) and the list of degrees of black
vertices (also in non increasing order). We will always assume
that "white" list is lexicographically not less, than "black"
list. \pmn Estimations of Hausdorff dimension we will write in
order: the box counting estimation, the packing estimation and the
JP-algorithm estimation. If some method is inapplicable, then we
will put "?" in the corresponding position. \pmn
\begin{enumerate}
    \item Passport $\langle 4,1\,|\,2,1,1,1\rangle$.\pn
    \[\parbox{40mm}{\begin{picture}(105,50) \put(15,25){\circle*{3}}
    \put(35,5){\circle*{3}} \put(35,45){\circle*{3}}
    \put(35,25){\circle{4}} \put(55,25){\circle*{3}}
    \put(75,25){\circle{4}} \put(15,25){\line(1,0){18}}
    \put(35,5){\line(0,1){18}} \put(35,45){\line(0,-1){18}}
    \put(37,25){\line(1,0){36}} \put(0,22){T:}
    \put(95,22){$\Rightarrow$} \end{picture}} \quad
    p_T=\dfrac{(3z+1)^4(3z-4)}{128}+1.\]
    \pmn Polynomial $p_T$ has an attracting 24-cycle. Iterations of $\pm 1$
    converge to this cycle, i.e. $p_T$ is of g3-type. The set $J(p_T)$ is very
    similar to the set in Figure 2. Hausdorff dimension:
    $\approx 1.65$, $\approx 1.32$, ?. \pmn
    \item Passport $\langle 3,2\,|\,2,1,1,1\rangle$.\pn
    \[\parbox{43mm}{\begin{picture}(120,40) \put(15,5){\circle*{3}}
    \put(15,35){\circle*{3}} \put(30,20){\circle{4}}
    \put(50,20){\circle*{3}} \put(65,20){\circle{4}}
    \put(80,20){\circle*{3}} \put(15,5){\line(1,1){14}}
    \put(15,35){\line(1,-1){14}} \put(32,20){\line(1,0){31}}
    \put(67,20){\line(1,0){13}} \put(0,17){T:}
    \put(105,17){$\Rightarrow$}
    \end{picture}} \quad p_T=-\,\dfrac{(z+2)^3(z-3)^2}{54}+1.\]
    Polynomial $p_T$ has an attracting 2-cycle:
    $$0.607872363\rightarrow 0.879463661\rightarrow 0.607872363$$
    Iterations $+1$ and $-1$ converge to this cycle,
    i.e. $p_T$ is of g2-type. Hausdorff dimension:  $\approx 1.24$,
    $\approx 1.19$, ?. \pmn
    \item  Passport $\langle 3,1,1\,|\,3,1,1\rangle$.\pn
    \[\parbox{35mm}{\begin{picture}(100,40) \put(15,5){\circle*{3}}
    \put(15,35){\circle*{3}} \put(30,20){\circle{4}}
    \put(50,20){\circle*{3}} \put(65,5){\circle{4}}
    \put(65,35){\circle{4}} \put(15,5){\line(1,1){14}}
    \put(15,35){\line(1,-1){14}} \put(32,20){\line(1,0){18}}
    \put(50,20){\line(1,1){14}} \put(50,20){\line(1,-1){14}}
    \put(0,17){T:} \put(85,17){$\Rightarrow$}\end{picture}}
    \quad p_T=-12z^5+10z^3-\,\dfrac{15z}{4}.\]   \pmn Iterations of
    $+1$ and $-1$ converge to infinity, i.e. $p_T$ is of g4-type. Julia
    set is totally disconnected. Hausdorff dimension: ?, $\approx 0.85$,
    $\approx 0.83$. \pmn
    \item Passport $\langle 3,1,1\,|\,2,2,1\rangle$.\pn
    \[\parbox{35mm}{\begin{picture}(85,50) \put(15,25){\circle*{3}}
    \put(30,25){\circle{4}} \put(40,35){\circle*{3}}
    \put(40,15){\circle*{3}} \put(50,5){\circle{4}}
    \put(50,45){\circle{4}} \put(15,25){\line(1,0){13}}
    \put(31,26){\line(1,1){18}} \put(31,24){\line(1,-1){18}}
    \put(0,22){T:} \put(75,22){$\Rightarrow$} \end{picture}}
    \quad p_T=\dfrac{(2z+1)^3(2z^2-3z+18)}{432}+1.\]
    Iterations of $+1$ and $-1$ converge to
    infinity, i.e. $p_T$ is of g4. Hausdorff
    dimension: ?, $\approx 1.11$, $\approx 1.15$. \pmn
    \item Passport $\langle 2,2,1\,|\,2,2,1\rangle$.\pn
    \[\parbox{40mm}{\begin{picture}(100,30) \put(15,15){\circle*{3}}
    \put(25,15){\circle{4}} \put(35,15){\circle*{3}}
    \put(45,15){\circle{4}} \put(55,15){\circle*{3}}
    \put(65,15){\circle{4}} \put(15,15){\line(1,0){8}}
    \put(27,15){\line(1,0){16}} \put(47,15){\line(1,0){16}}
    \put(0,12){T:} \put(90,12){$\Rightarrow$}
    \end{picture}} \quad p_T=\frac{z^5-5z^3+5z}{2}\,.\]
    The polynomial $p_T$ has two attracting 4-cycles:
    $$0.500469 \to 0.953491 \to  0.610612 \to 0.999810 \to
    0.500469$$ and
    $$-0.500469\to -0.953491\to  -0.610612\to -0.999810 \to
    -0.500469$$ Iterations of $1$ converge to the first cycle and
    iterations of $-1$ ---  to the second, i.e. $p_T$ is of s2-type.
    Hausdorff dimension: $\approx 1.60$, $\approx 1.50$, ?.
\end{enumerate}

\section{Trees with six edges}
\pn  Only one non-symmetric 6 edge tree generates a connected
Julia set:
\[\parbox{40mm}{\begin{picture}(105,40) \put(0,17){T:} \put(15,20){\circle*{3}}
\put(30,5){\circle*{3}} \put(30,35){\circle*{3}}
\put(30,20){\circle{4}} \put(45,20){\circle*{3}}
\put(60,20){\circle{4}} \put(75,20){\circle*{3}}
\put(15,20){\line(1,0){13}} \put(30,5){\line(0,1){13}}
\put(30,35){\line(0,-1){13}} \put(32,20){\line(1,0){26}}
\put(62,20){\line(1,0){13}} \put(95,17){$\Rightarrow$}
\end{picture}} \quad
p_T=\dfrac{-z^6+6z^4+4z^3-9z^2-12z+4}{8}\,.\] The polynomial $p_T$
has a superattracting 2-cycle: $1 \leftrightarrow -1$,
$p'_T(1)=p'_T(-1)=0$, i.e. $p_T$ is of g2-type. Hausdorff
dimension: $\approx 1.21$, $\approx 1.15$, ?. Julia set is similar
to Julia set in Figure 4.

\section{Trees with seven edges}
\pn Here we have many trees that generate connected Julia sets.
For such tree $T$ we will present behavior of iterations of $\pm
1$, characteristics of Julia set $J(p_T)$ and the picture of this
set in interesting cases. In the picture of Julia set points that
quite fast come into attracting domain of infinity (or into
attracting domain of attracting point or a cycle) are white,
points that come there more slowly are yellow, even more slowly
are green, then light red, then deep red. \pmn We will use the
following notations:
\begin{itemize}
    \item $\pm 1\to\infty$ means that iterations of $1$ and $-1$
    converge to infinity, so Julia set is totally disconnected;
    \item "p" means that SZ-polynomial has an attracting point;
    \item "$c(k)$" means that SZ-polynomial has an attracting
    $k$-cycle;
    \item "$1\to c_1(2), -1\to c_2(3)$" means that iterations of
    $1$ converge to attracting 2-cycle and iterations of $-1$ to
    attracting $3$-cycle.
\end{itemize}
\pmn 1) $\langle 6,1\,|\,2,1,1,1,1,1\rangle$.
\[\parbox{5cm}{\begin{picture}(140,70) \put(0,35){\circle*{3}}
\put(20,65){\circle*{3}} \put(20,5){\circle*{3}}
\put(60,65){\circle*{3}} \put(60,5){\circle*{3}}
\put(40,35){\circle{4}} \put(80,35){\circle*{3}}
\put(110,35){\circle{4}} \put(0,35){\line(1,0){38}}
\put(20,5){\line(2,3){19}} \put(20,65){\line(2,-3){19}}
\put(60,5){\line(-2,3){19}} \put(60,65){\line(-2,-3){19}}
\put(42,35){\line(1,0){66}} \put(130,32){$\Rightarrow$}
\end{picture}} \quad \pm 1\to c(4);\quad \text{dim:
$1.38,1.38,1.17$.}\] The polynomial $p_T$ is of g3-type. Julia set
here is similar to Julia set in Figure 2. \pmn 2) $\langle
5,2\,|\,2,1,1,1,1,1\rangle$.
\[\parbox{5cm}{\begin{picture}(125,70) \put(0,15){\circle*{3}}
\put(0,55){\circle*{3}} \put(30,35){\circle{4}}
\put(40,5){\circle*{3}} \put(40,65){\circle*{3}}
\put(60,35){\circle*{3}} \put(80,35){\circle{4}}
\put(95,35){\circle*{3}} \put(115,32){$\Rightarrow$}
\put(0,15){\line(3,2){29}} \put(0,55){\line(3,-2){29}}
\put(32,35){\line(1,0){46}} \put(40,5){\line(-1,3){9}}
\put(40,65){\line(-1,-3){9}}
\put(82,35){\line(1,0){13}}\end{picture}} \quad \pm 1\to
c(2);\quad \text{dim: $1.25,1.24,1.17$.}\] The polynomial $p_T$ is
of g2-type. Julia set here is similar to Julia set in Figure 4.
\pmn 3) $\langle 5,1,1\,|\,2,2,1,1,1\rangle$.
\[\parbox{4cm}{\begin{picture}(95,70) \put(0,19){\circle*{3}}
\put(0,51){\circle*{3}}  \put(24,35){\circle{4}}
\put(30,53){\circle*{3}}  \put(30,17){\circle*{3}}
\put(34,5){\circle{4}}  \put(34,65){\circle{4}}
\put(54,35){\circle*{3}} \put(0,19){\line(3,2){23}}
\put(0,51){\line(3,-2){23}} \put(25,37){\line(1,3){9}}
\put(25,33){\line(1,-3){9}} \put(26,35){\line(1,0){28}}
\put(85,32){$\Rightarrow$}\end{picture}} \quad \pm 1\to p\,;\quad
\text{dim: $1.11,1.07,1.31$.}\] Polynomial $p_T$ is of g1-type.
Convergence rate is quite good: $|p'_T(p)|\approx 0.35$.\pmn 4)
$\langle 4,3\,|\,2,1,1,1,1,1\rangle$.
\[\parbox{4cm}{\begin{picture}(105,50) \put(0,25){\circle*{3}}
\put(20,5){\circle*{3}}  \put(20,25){\circle{4}}
\put(20,45){\circle*{3}}  \put(40,25){\circle*{3}}
\put(60,25){\circle{4}}  \put(75,10){\circle*{3}}
\put(75,40){\circle*{3}} \put(0,25){\line(1,0){18}}
\put(20,5){\line(0,1){18}} \put(20,45){\line(0,-1){18}}
\put(22,25){\line(1,0){36}} \put(75,10){\line(-1,1){14}}
\put(75,40){\line(-1,-1){14}}
\put(95,22){$\Rightarrow$}\end{picture}} \quad \pm 1\to p\,;\text{
dim: $1.21,1.15,?$.}\] Polynomial $p_T$ is of g1-type. Convergence
rate is weak: $|p'_T(p)|\approx 0.95$. \pmn 5) $\langle
4,2,1\,|\,2,2,1,1,1\rangle$.
\[\parbox{45mm}{\begin{picture}(105,50) \put(0,25){\circle*{3}}
\put(20,25){\circle{4}}  \put(40,25){\circle*{3}}
\put(60,25){\circle{4}}  \put(60,5){\circle*{3}}
\put(60,45){\circle*{3}}  \put(80,25){\circle*{3}}
\put(100,25){\circle{4}} \put(0,25){\line(1,0){18}}
\put(22,25){\line(1,0){36}} \put(60,45){\line(0,-1){18}}
\put(60,5){\line(0,1){18}} \put(62,25){\line(1,0){36}}
\put(115,22){$\Rightarrow$}\end{picture}} \quad \pm 1\to p\,;\quad
\text{dim: $1.13,1.19,0.99$.}\] Polynomial $p_T$ is of g1-type.
\pmn 6) $\langle 4,1,1,1\,|\,3,2,1,1\rangle$.
\[\parbox{4cm}{\begin{picture}(110,50) \put(0,25){\circle{4}}
\put(20,25){\circle*{3}}  \put(40,25){\circle{4}}
\put(40,5){\circle*{3}}  \put(40,45){\circle*{3}}
\put(60,25){\circle*{3}}  \put(75,10){\circle{4}}
\put(75,40){\circle{4}} \put(2,25){\line(1,0){36}}
\put(40,5){\line(0,1){18}} \put(40,45){\line(0,-1){18}}
\put(42,25){\line(1,0){18}} \put(60,25){\line(1,1){14}}
\put(60,25){\line(1,-1){14}}
\put(95,22){$\Rightarrow$}\end{picture}} \quad \pm 1\to
c(4)\,;\quad \text{dim: $1.50,1.38,1.33$.}\] Polynomial $p_T$ is
of g3-type. Here we have a high rate of convergence to the
attracting cycle: the product of derivatives in cycle points is
around $10^{-4}$.
\[J(p_T):\quad\parbox{6cm} {\includegraphics[width=6cm,height=4cm]{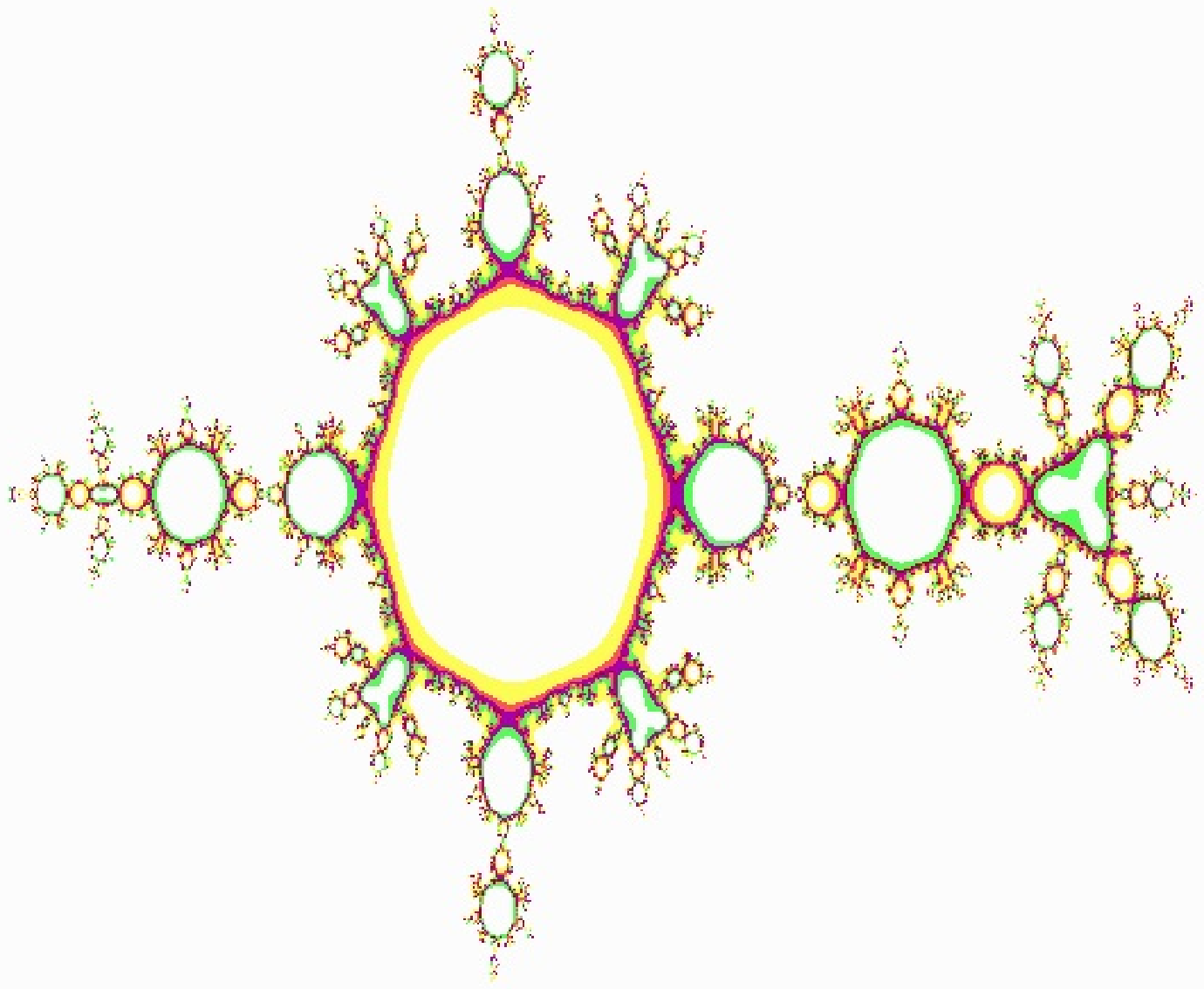}}\]
\pmn
\begin{center}{Figure 3}\end{center}
7) $\langle 4,1,1,1\,|\,2,2,2,1\rangle$.
\[\parbox{35mm}{\begin{picture}(85,80) \put(0,40){\circle*{3}}
\put(20,5){\circle{4}}  \put(20,20){\circle*{3}}
\put(20,40){\circle{4}}  \put(20,60){\circle*{3}}
\put(20,75){\circle{4}}  \put(40,40){\circle*{3}}
\put(55,40){\circle{4}} \put(0,40){\line(1,0){18}}
\put(20,7){\line(0,1){31}} \put(20,73){\line(0,-1){31}}
\put(22,40){\line(1,0){31}}
\put(75,38){$\Rightarrow$}\end{picture}} \quad \pm 1\to p\,;\quad
\text{dim: $1.16,1.12,?$.}\] The polynomial $p_T$ is of g1-type.
\pmn 8) $\langle 3,2,2\,|\,3,1,1,1,1\rangle$.
\[\parbox{40mm}{\begin{picture}(105,70) \put(0,55){\circle*{3}}
\put(0,15){\circle*{3}}  \put(20,35){\circle{4}}
\put(45,35){\circle*{3}}  \put(60,50){\circle{4}}
\put(60,20){\circle{4}}  \put(75,65){\circle*{3}}
\put(75,5){\circle*{3}} \put(0,15){\line(1,1){19}}
\put(0,55){\line(1,-1){19}} \put(22,35){\line(1,0){23}}
\put(45,35){\line(1,1){14}} \put(45,35){\line(1,-1){14}}
\put(75,65){\line(-1,-1){14}} \put(75,5){\line(-1,1){14}}
\put(95,33){$\Rightarrow$}\end{picture}} \quad \pm 1\to
c(2)\,;\quad \text{dim: $1.30,1.26,?$.}\]  The polynomial $p_T$ is
of g2-type. Here we have a medium rate of convergence to the
attracting cycle: the product of derivatives in cycle points is
around $0.38$.
\[J(p_T):\quad\parbox{6cm} {\includegraphics[width=6cm,height=4cm]{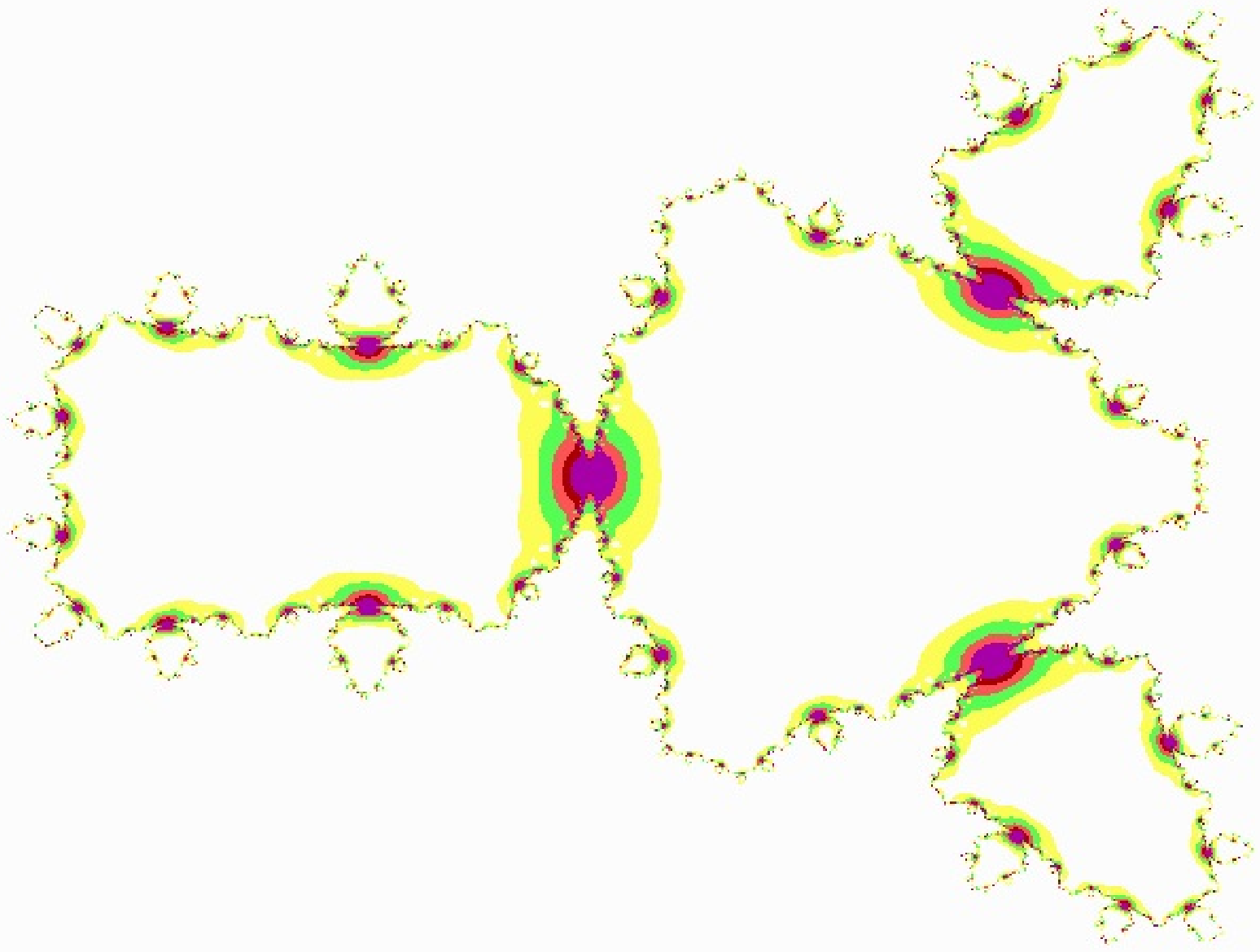}}\]
\begin{center}{Figure 4}\end{center}
\pmn 9) $\langle 3,2,2\,|\,2,2,1,1,1\rangle$.
\[\parbox{40mm}{\begin{picture}(95,100) \put(0,50){\circle*{3}}
\put(20,50){\circle{4}}  \put(35,65){\circle*{3}}
\put(35,35){\circle*{3}}  \put(50,80){\circle{4}}
\put(50,20){\circle{4}}  \put(65,95){\circle*{3}}
\put(65,5){\circle*{3}} \put(0,50){\line(1,0){18}}
\put(21,51){\line(1,1){28}} \put(21,49){\line(1,-1){28}}
\put(65,5){\line(-1,1){14}} \put(65,95){\line(-1,-1){14}}
\put(85,48){$\Rightarrow$}\end{picture}} \quad \pm 1\to
\infty\,;\quad \text{dim: $?,1.22,?$.}\] The polynomial $p_T$ is
of g4-type. \pmn 10) $\langle 3,2,2\,|\,2,2,1,1,1\rangle$.
\[\parbox{45mm}{\begin{picture}(120,40) \put(0,5){\circle*{3}}
\put(0,35){\circle*{3}}  \put(15,20){\circle{4}}
\put(30,20){\circle*{3}}  \put(45,20){\circle{4}}
\put(60,20){\circle*{3}}  \put(75,20){\circle{4}}
\put(90,20){\circle*{3}} \put(0,5){\line(1,1){14}}
\put(0,35){\line(1,-1){14}} \put(17,20){\line(1,0){26}}
\put(47,20){\line(1,0){26}} \put(77,20){\line(1,0){13}}
\put(110,18){$\Rightarrow$}\end{picture}} \quad -1 \to c(2)\,;\,
1\to c(4)\,;\quad \text{dim: $1.63,1.55,?$.}\] The polynomial
$p_T$ is of s2-type. \pmn 11) $\langle
3,2,1,1\,|\,3,2,1,1\rangle$.
\[\parbox{40mm}{\begin{picture}(115,55) \put(0,5){\circle{4}}
\put(15,20){\circle*{3}}  \put(15,50){\circle*{3}}
\put(30,35){\circle{4}}  \put(55,35){\circle*{3}}
\put(70,20){\circle{4}} \put(70,50){\circle{4}}
\put(85,5){\circle*{3}} \put(1,6){\line(1,1){28}}
\put(15,50){\line(1,-1){14}} \put(32,35){\line(1,0){23}}
\put(55,35){\line(1,1){14}} \put(55,35){\line(1,-1){14}}
\put(85,5){\line(-1,1){14}}
\put(95,25){$\Rightarrow$}\end{picture}} \quad \pm 1 \to p\,;\quad
\text{dim: $1.02,1.02,1.03$.}\] The polynomial $p_T$ is of
g1-type. \pmn 12) $\langle 3,2,1,1,\,|\,2,2,2,1\rangle$.
\[\parbox{40mm}{\begin{picture}(95,50) \put(0,25){\circle*{3}}
\put(15,25){\circle{4}} \put(30,25){\circle*{3}}
\put(45,25){\circle{4}} \put(55,35){\circle*{3}}
\put(55,15){\circle*{3}} \put(65,45){\circle{4}}
\put(65,5){\circle{4}} \put(0,25){\line(1,0){13}}
\put(17,25){\line(1,0){26}} \put(46,26){\line(1,1){18}}
\put(46,24){\line(1,-1){18}} \put(85,22){$\Rightarrow$}
\end{picture}} \quad -1 \to p\,;\quad 1\to \infty\,;\text{dim:
$1.32,1.28,?$.}\] The polynomial $p_T$ is of s3-type. \pmn
\[J(p_T):\quad\parbox{6cm}{\includegraphics[width=6cm,height=4cm]{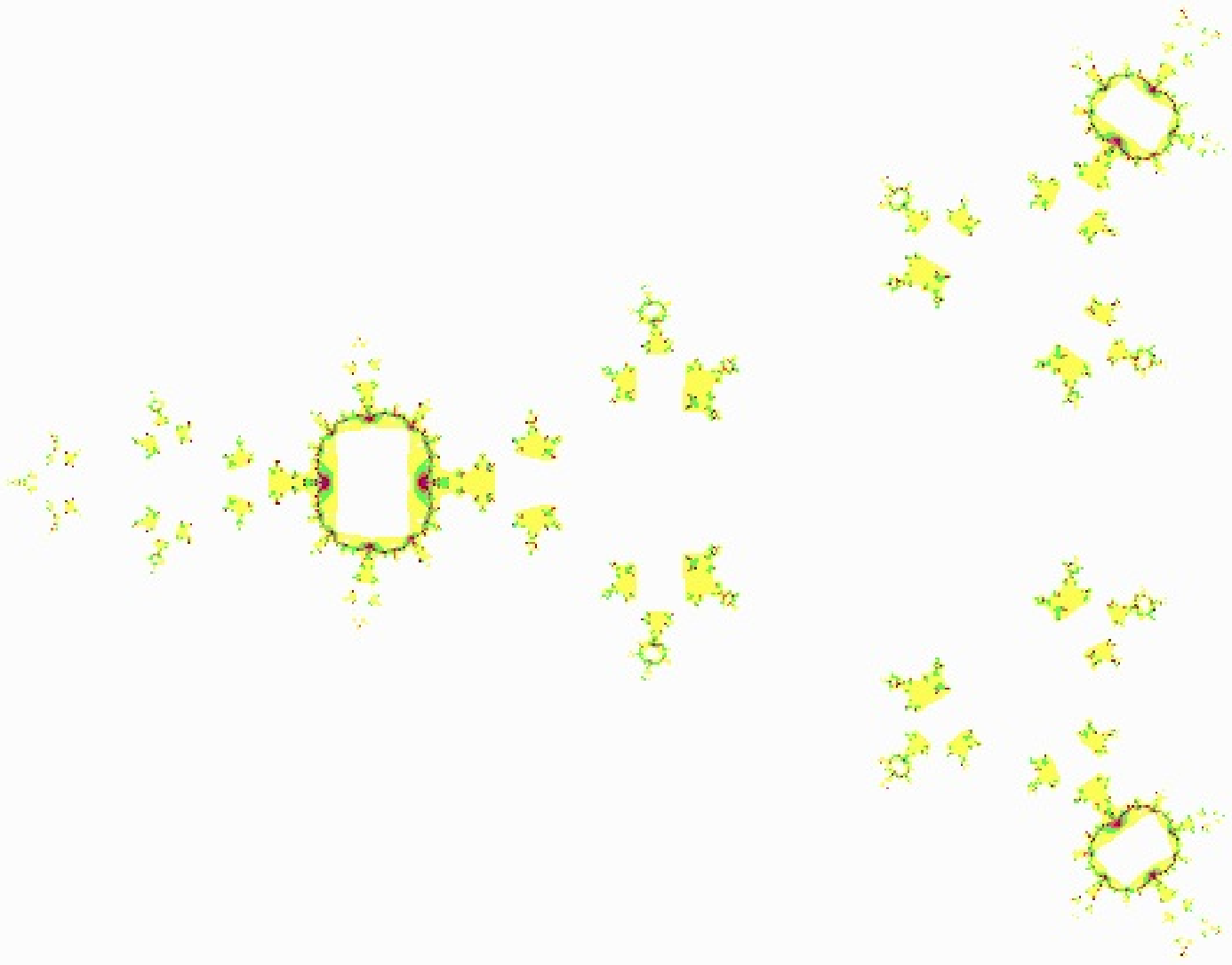}}\]
\pmn
\begin{center}{Figure 5}\end{center}

\section{Trees with eight edges}
\pn There are five trees whose SZ-polynomials have an attracting
point, i.e. are of g1-type.
\[\begin{picture}(300,40) \put(0,20){\circle*{3}}
\put(15,20){\circle*{3}} \put(30,20){\circle*{3}}
\put(50,20){\circle*{3}} \put(70,20){\circle*{3}}
\put(85,20){\circle*{3}} \put(50,5){\circle*{3}}
\put(35,35){\circle*{3}} \put(65,35){\circle*{3}}
\put(0,20){\line(1,0){85}} \put(50,5){\line(0,1){15}}
\put(50,20){\line(1,1){15}} \put(50,20){\line(-1,1){15}}

\put(110,20){\circle*{3}} \put(125,20){\circle*{3}}
\put(145,20){\circle*{3}} \put(165,20){\circle*{3}}
\put(180,20){\circle*{3}} \put(195,20){\circle*{3}}
\put(145,5){\circle*{3}} \put(130,35){\circle*{3}}
\put(160,35){\circle*{3}} \put(110,20){\line(1,0){85}}
\put(145,5){\line(0,1){15}} \put(145,20){\line(1,1){15}}
\put(145,20){\line(-1,1){15}}

\put(220,20){\circle*{3}} \put(240,20){\circle*{3}}
\put(260,20){\circle*{3}} \put(260,5){\circle*{3}}
\put(260,35){\circle*{3}} \put(280,10){\circle*{3}}
\put(280,30){\circle*{3}} \put(300,40){\circle*{3}}
\put(300,0){\circle*{3}} \put(220,20){\line(1,0){40}}
\put(260,5){\line(0,1){30}} \put(260,20){\line(2,1){40}}
\put(260,20){\line(2,-1){40}} \end{picture}\] \par\medskip
\[\begin{picture}(180,40) \put(0,20){\circle*{3}}
\put(15,20){\circle*{3}} \put(30,20){\circle*{3}}
\put(30,5){\circle*{3}} \put(30,35){\circle*{3}}
\put(50,20){\circle*{3}} \put(70,30){\circle*{3}}
\put(65,10){\circle*{3}} \put(80,0){\circle*{3}}
\put(0,20){\line(1,0){50}} \put(30,5){\line(0,1){30}}
\put(50,20){\line(2,1){20}} \put(50,20){\line(3,-2){30}}

\put(100,20){\circle*{3}} \put(115,20){\circle*{3}}
\put(130,20){\circle*{3}} \put(130,5){\circle*{3}}
\put(130,35){\circle*{3}} \put(150,20){\circle*{3}}
\put(170,10){\circle*{3}} \put(165,30){\circle*{3}}
\put(180,40){\circle*{3}} \put(100,20){\line(1,0){50}}
\put(130,5){\line(0,1){30}} \put(150,20){\line(2,-1){20}}
\put(150,20){\line(3,2){30}} \end{picture}\] \pmn There are three
trees whose SZ-polynomials has an attracting 2-cycle, i.e. are of
g2-type.
\[\begin{picture}(265,70) \put(0,35){\circle*{3}}
\put(10,20){\circle*{3}} \put(10,50){\circle*{3}}
\put(20,35){\circle*{3}} \put(30,20){\circle*{3}}
\put(30,50){\circle*{3}} \put(40,35){\circle*{3}}
\put(60,35){\circle*{3}} \put(75,35){\circle*{3}}
\put(0,35){\line(1,0){75}} \put(10,20){\line(2,3){20}}
\put(10,50){\line(2,-3){20}}

\put(110,45){\circle*{3}} \put(110,25){\circle*{3}}
\put(125,35){\circle*{3}} \put(135,20){\circle*{3}}
\put(135,50){\circle*{3}} \put(145,35){\circle*{3}}
\put(165,35){\circle*{3}} \put(175,25){\circle*{3}}
\put(175,45){\circle*{3}} \put(110,25){\line(3,2){15}}
\put(110,45){\line(3,-2){15}} \put(125,35){\line(2,3){10}}
\put(125,35){\line(2,-3){10}} \put(125,35){\line(1,0){40}}
\put(165,35){\line(1,1){10}} \put(165,35){\line(1,-1){10}}

\put(220,35){\circle*{3}} \put(230,20){\circle*{3}}
\put(230,50){\circle*{3}} \put(240,35){\circle*{3}}
\put(250,20){\circle*{3}} \put(250,50){\circle*{3}}
\put(260,5){\circle*{3}} \put(260,65){\circle*{3}}
\put(265,35){\circle*{3}} \put(220,36){\line(1,0){45}}
\put(230,20){\line(2,3){30}} \put(230,50){\line(2,-3){30}}
\end{picture}\] \pmn Next three cases are more interesting.
\pmn 1.\quad\parbox{50mm}{\begin{picture}(105,40)
\multiput(0,20)(15,0){5}{\circle*{3}} \put(30,5){\circle*{3}}
\put(30,35){\circle*{3}} \put(75,35){\circle*{3}}
\put(75,5){\circle*{3}}\put(0,20){\line(1,0){60}}
\put(30,5){\line(0,1){30}} \put(60,20){\line(1,1){15}}
\put(60,20){\line(1,-1){15}} \put(95,18){$\Rightarrow$}
\put(135,18){$-1\to c(?),\, 1\to p.$}\end{picture}}\pmn Polynomial
$p_T$ is of s1-type. \pmn
2.\quad\parbox{40mm}{\begin{picture}(160,40)
\multiput(0,20)(15,0){7}{\circle*{3}} \put(15,5){\circle*{3}}
\put(15,35){\circle*{3}} \put(0,20){\line(1,0){90}}
\put(15,5){\line(0,1){30}}  \put(110,18){$\Rightarrow$}
\put(140,18){$\pm 1\to c(7)$}\end{picture}}\pmn Polynomial $p_T$
is of g3-type and its Julia set is visually interesting.
\[J(p_T):\quad\parbox{6cm}{\includegraphics[width=6cm,height=4cm]{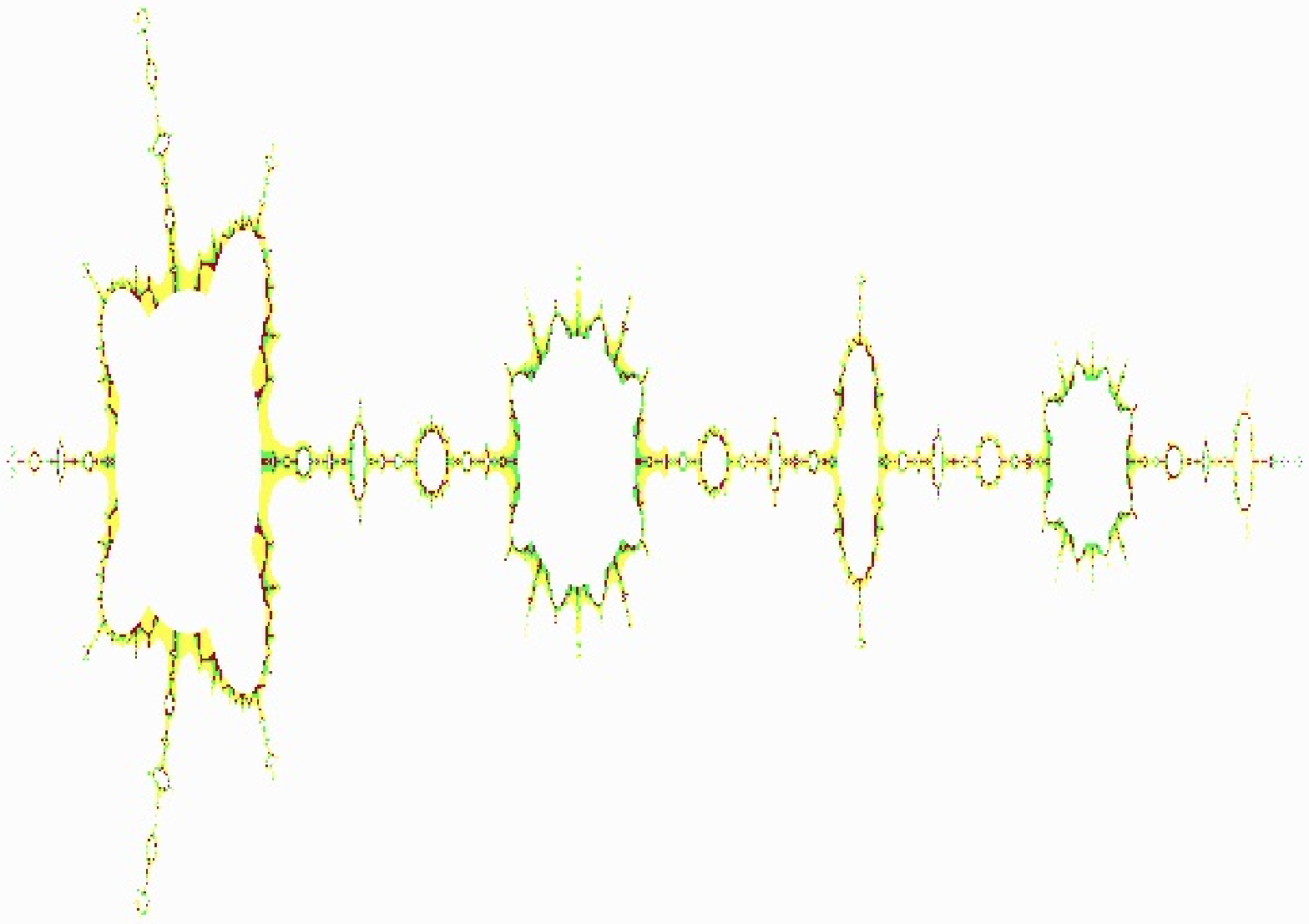}}
\quad \]
\begin{center}{Figure 6}\end{center}
\pmn 3.\quad\parbox{50mm}{\begin{picture}(130,70)
\put(20,65){\circle*{3}} \put(20,35){\circle*{3}}
\put(35,50){\circle*{3}} \put(50,50){\circle*{3}}
\put(65,50){\circle*{3}} \put(80,65){\circle*{3}}
\put(80,35){\circle*{3}} \put(95,20){\circle*{3}}
\put(110,5){\circle*{3}} \put(20,65){\line(1,-1){15}}
\put(20,35){\line(1,1){15}} \put(35,50){\line(1,0){30}}
\put(65,50){\line(1,1){15}} \put(65,50){\line(1,-1){45}}
\put(120,46){$\Rightarrow$} \put(160,46){$\pm 1\to c(7)$}
\end{picture}}\pmn It is an interesting example of g3-type
polynomial $p_T$ that is not defined over $\mathbb{R}$, i.e. where
$T$ is not mirror symmetric.

\section{Some results about trees with big number of edges}
\pn If the passport is relatively simple, then SZ-polynomials can
be computed for trees with big number of edges. Here are some
examples.

\begin{ex} Passport $\langle n,1\,|\,2,1,\ldots,1\rangle$. If
$n\geqslant 7$, then Julia set is totally disconnected. Otherwise:
$$n=3: \pm 1\to c(10);\, n=4: \pm 1\to c(24);\, n=5: \pm 1\to
\infty;\, n=6: \pm 1\to c(4).$$ \pmn Passport $\langle
n,2\,|\,2,1,\ldots,1\rangle$. If $n\geqslant 13$, then Julia set
is totally disconnected. Otherwise:
$$\begin{array}{ll} n=\,\,3: \pm
1\to c(2); & n=\,\,4: \pm 1\to c(2);\\ n=\,\,5: \pm 1\to c(2);&
n=\,\,6: \pm 1\to c(2);\\ n=\,\,7: \pm 1\to c(4);& n=\,\,8: \pm 1\to c(16);\\
n=\,\,9: \pm 1\to \infty;& n=10: \pm 1\to c(3);\\ n=11: \pm 1\to
\infty;& n=12: \pm 1\to c(5). \end{array}$$ \pmn Passport $\langle
n,3\,|\,2,1,\ldots,1\rangle$. If $4\leqslant n \leqslant 10$ then
$\pm 1\to c(2)$. If $n\geqslant 19$, then Julia set is totally
disconnected. Otherwise:
$$\begin{array}{lll} n=11: \pm 1\to c(4); & n=12: \pm 1\to c(4);
& n=13: \pm 1\to c(3); \\ n=14: \pm 1\to c(5); & n=15: \pm 1\to
\infty; & n=16: \pm 1\to c(3); \\ n=17: \pm 1\to \infty; & n=18:
\pm 1\to c(6). & \end{array}$$ \pmn Passport $\langle
13,1,1\,|\,2,2,1,\ldots,1\rangle$.
\[\begin{picture}(340,70) \put(0,32){1)} \put(75,35){\circle*{3}}
\put(105,35){\circle*{3}} \put(101,49){\circle*{3}}
\put(101,21){\circle*{3}} \put(92,60){\circle*{3}}
\put(92,10){\circle*{3}} \put(79,65){\circle*{3}}
\put(79,5){\circle*{3}} \put(64,63){\circle*{3}}
\put(64,7){\circle*{3}} \put(53,55){\circle*{3}}
\put(53,15){\circle*{3}} \put(46,42){\circle*{3}}
\put(46,28){\circle*{3}} \put(17,49){\circle*{3}}
\put(17,21){\circle*{3}} \put(75,35){\line(1,0){30}}
\qbezier(75,35)(88,42)(101,49) \qbezier(75,35)(88,28)(101,21)
\qbezier(75,35)(84,48)(92,60) \qbezier(75,35)(84,22)(92,10)
\qbezier(75,35)(77,50)(79,65) \qbezier(75,35)(77,20)(79,5)
\qbezier(75,35)(70,49)(64,63) \qbezier(75,35)(70,21)(64,7)
\qbezier(75,35)(64,45)(53,55) \qbezier(75,35)(64,25)(53,15)
\qbezier(75,35)(46,42)(17,49) \qbezier(75,35)(46,28)(17,21)
\put(120,32){$\pm 1\to\infty$}

\put(200,32){2)} \put(245,35){\circle*{3}}
\put(275,35){\circle*{3}} \put(271,49){\circle*{3}}
\put(271,21){\circle*{3}} \put(262,60){\circle*{3}}
\put(262,10){\circle*{3}} \put(249,65){\circle*{3}}
\put(249,5){\circle*{3}} \put(234,63){\circle*{3}}
\put(234,7){\circle*{3}} \put(223,55){\circle*{3}}
\put(223,15){\circle*{3}} \put(216,42){\circle*{3}}
\put(216,28){\circle*{3}} \put(297,63){\circle*{3}}
\put(297,7){\circle*{3}} \put(245,35){\line(1,0){30}}
\qbezier(245,35)(271,49)(297,63) \qbezier(245,35)(271,21)(297,7)
\qbezier(245,35)(254,48)(262,60) \qbezier(245,35)(254,22)(262,10)
\qbezier(245,35)(247,50)(249,65) \qbezier(245,35)(247,20)(249,5)
\qbezier(245,35)(240,49)(234,63) \qbezier(245,35)(240,21)(234,7)
\qbezier(245,35)(234,45)(223,55) \qbezier(245,35)(234,25)(223,15)
\qbezier(245,35)(230,38)(216,42) \qbezier(245,35)(230,32)(216,28)
\put(320,32){$\pm 1\to\infty$}
\end{picture}\]
\[\begin{picture}(340,110) \put(0,52){3)} \put(75,55){\circle*{3}}
\put(105,55){\circle*{3}} \put(101,69){\circle*{3}}
\put(101,41){\circle*{3}} \put(92,80){\circle*{3}}
\put(92,30){\circle*{3}} \put(79,85){\circle*{3}}
\put(79,25){\circle*{3}} \put(64,83){\circle*{3}}
\put(64,27){\circle*{3}} \put(53,75){\circle*{3}}
\put(53,35){\circle*{3}} \put(46,62){\circle*{3}}
\put(46,48){\circle*{3}} \put(31,95){\circle*{3}}
\put(31,15){\circle*{3}} \put(75,55){\line(1,0){30}}
\qbezier(75,55)(88,62)(101,69) \qbezier(75,55)(88,48)(101,41)
\qbezier(75,55)(84,68)(92,80) \qbezier(75,55)(84,42)(92,30)
\qbezier(75,55)(77,70)(79,85) \qbezier(75,55)(77,40)(79,25)
\qbezier(75,55)(70,69)(64,83) \qbezier(75,55)(70,41)(64,27)
\qbezier(75,55)(53,75)(31,95) \qbezier(75,55)(53,35)(31,15)
\qbezier(75,55)(60,59)(46,62) \qbezier(75,55)(60,51)(46,48)
\put(120,52){$\pm 1\to\infty$}

\put(200,52){4)} \put(245,55){\circle*{3}}
\put(275,55){\circle*{3}} \put(271,69){\circle*{3}}
\put(271,41){\circle*{3}} \put(262,80){\circle*{3}}
\put(262,30){\circle*{3}} \put(279,105){\circle*{3}}
\put(279,5){\circle*{3}} \put(249,85){\circle*{3}}
\put(249,25){\circle*{3}} \put(234,83){\circle*{3}}
\put(234,27){\circle*{3}} \put(223,75){\circle*{3}}
\put(223,35){\circle*{3}} \put(216,62){\circle*{3}}
\put(216,48){\circle*{3}} \put(245,55){\line(1,0){30}}
\qbezier(245,55)(258,62)(271,69) \qbezier(245,55)(258,48)(271,41)
\qbezier(245,55)(262,80)(279,105) \qbezier(245,55)(262,30)(279,5)
\qbezier(245,55)(247,70)(249,85) \qbezier(245,55)(247,40)(249,25)
\qbezier(245,55)(240,69)(234,83) \qbezier(245,55)(240,41)(234,27)
\qbezier(245,55)(234,65)(223,75) \qbezier(245,55)(234,45)(223,35)
\qbezier(245,55)(230,58)(216,62) \qbezier(245,55)(230,52)(216,48)
\put(320,52){$\pm 1\to\infty$}
\end{picture}\]

\[\begin{picture}(340,130) \put(0,62){5)} \put(75,65){\circle*{3}}
\put(105,65){\circle*{3}} \put(101,79){\circle*{3}}
\put(101,51){\circle*{3}} \put(92,90){\circle*{3}}
\put(92,40){\circle*{3}} \put(79,95){\circle*{3}}
\put(79,35){\circle*{3}} \put(64,93){\circle*{3}}
\put(64,37){\circle*{3}} \put(53,121){\circle*{3}}
\put(53,9){\circle*{3}} \put(53,85){\circle*{3}}
\put(53,45){\circle*{3}} \put(46,72){\circle*{3}}
\put(46,58){\circle*{3}} \put(75,65){\line(1,0){30}}
\qbezier(75,65)(88,72)(101,79) \qbezier(75,65)(88,58)(101,51)
\qbezier(75,65)(84,78)(92,90) \qbezier(75,65)(84,52)(92,40)
\qbezier(75,65)(77,80)(79,95) \qbezier(75,65)(77,50)(79,35)
\qbezier(75,65)(64,93)(53,121) \qbezier(75,65)(64,37)(53,9)
\qbezier(75,65)(64,75)(53,85) \qbezier(75,65)(64,55)(53,45)
\qbezier(75,65)(60,69)(46,72) \qbezier(75,65)(60,61)(46,58)
\put(120,62){$\pm 1\to p$}

\put(200,62){6)} \put(245,65){\circle*{3}}
\put(275,65){\circle*{3}} \put(271,79){\circle*{3}}
\put(271,51){\circle*{3}} \put(262,90){\circle*{3}}
\put(262,40){\circle*{3}} \put(249,95){\circle*{3}}
\put(249,35){\circle*{3}} \put(253,125){\circle*{3}}
\put(253,5){\circle*{3}} \put(234,93){\circle*{3}}
\put(234,37){\circle*{3}} \put(223,85){\circle*{3}}
\put(223,45){\circle*{3}} \put(216,72){\circle*{3}}
\put(216,58){\circle*{3}} \put(245,65){\line(1,0){30}}
\qbezier(245,65)(258,72)(271,79) \qbezier(245,65)(258,58)(271,51)
\qbezier(245,65)(254,78)(262,90) \qbezier(245,65)(254,52)(262,40)
\qbezier(245,65)(249,95)(253,125) \qbezier(245,65)(249,35)(253,5)
\qbezier(245,65)(240,79)(234,93) \qbezier(245,65)(240,51)(234,37)
\qbezier(245,65)(234,75)(223,85) \qbezier(245,65)(234,55)(223,45)
\qbezier(245,65)(230,68)(216,72) \qbezier(245,65)(230,62)(216,58)
\put(320,62){$\pm 1\to p$}
\end{picture}\] This example demonstrates that when we consider a
set of trees with the same passport, then almost all trees in this
set have totally disconnected Julia sets, but for "nearly
symmetric" trees this set is connected.
\end{ex}

\section{Conclusion}
\pn Further work in this field is related to the following
problems. \pmn \textbf{Problem 1.} Prove that SZ-polynomials exist
for all non-symmetric trees, or find an example of non-symmetric
tree, for which SZ-polynomial does not exist. \pmn \textbf{Problem
2.} When SZ-polynomial $p_T$ of a tree $T$ has an attracting cycle
$c(k)$, $k>1$? \pmn \textbf{Problem 3.} Construct an analogue of
SZ-polynomial for genus zero maps and study Julia sets for them.

\par\bigskip
\end{document}